\newtheorem{theorem}{Theorem}
\newtheorem{corollary}{Corollary}
\newtheorem{lemma}{Lemma}
\newcounter{case}[theorem]
\newcounter{subcase}[case]
\newcounter{claim}[case]
\newcounter{subclaim}[claim]
\newenvironment{case}[1][]{\refstepcounter{case}\par\noindent\textbf{Case \thecase:}\space#1}{}
\begin{document}

\title{Representativity and waist of cable knots}
\author{Rom\'an Aranda, Seungwon Kim and Maggy Tomova}
\date{} 

\maketitle

\begin{abstract}
We study the incompressible surfaces in the exterior of a cable knot and use this to compute the representativity and waist of most cable knots.
\end{abstract}

\section{Introduction and definitions}

Let $K\subset S^{3}$ be a knot and let $S$ be a closed orientable
surface containing $K$. Following \cite{Ozawa}, we define the representativity
of the pair $(S,K)$ as the minimal intersection number $|K\cap\partial D|$
over all the compressing discs for $S$ in $S^{3}$. Denoted as $r(S,K)$,
the representativity of $(S,K)$ measures how many times the knot
is ``wrapping" the surface $S$. 

The \emph{representativity of $K$} is the maximal number of $r(F,K)$
among all the closed orientable surfaces $F\subset S^{3}$ that contain
the knot. In other words, 
\[
r(K)=\max_{K\subset F}\min_{D}|\partial D\cap K|
\]

This knot invariant had been studied by M. Ozawa and it is known for several classes of knots: torus knots, 2-bridge knots, and composite knots. There are also bounds for algebraic knots and knots with Conway spheres, see \cite{Ozawa}. Recently, Kindred determined that all alternating knots have representativity 2, \cite{Kind}. 
%

Similarly, if $F\subset S^{3}$ is a closed incompressible
surface in the exterior of a knot $K$, the waist of $K$, $waist(F,K)$ of $(F,K)$
is the minimum number of intersection $|D\cap K|$, between $K$ and
the compressing discs for $F$ in $S^{3}$. The \emph{waist} of $K$ is the maximum number $waist(F,K)$ among all the closed incompresible
surfaces in the exterior of $K$. In other words, 
\[
waist(K)=\max_{K\subset F}\min_{D}|\partial D\cap K|
\]

It is known that there are many classes of knots with waist one:
2-bridge knots \cite{inc surfs 2bridge knots}, torus knots \cite{torus knots},
twisted torus knots with twists on 2-strands \cite{twisted torus knots},
small knots, alternating knots \cite{alternating knots}, almost alternating
knots \cite{almost alternating knots}, toroidally alternating knots
\cite{toroidally alternating knots}, 3-braid knots \cite{braid knots},
Montesinos knots \cite{Montesinos knots}, and algebraically alternating
knots \cite{Algebraically alternating knots}. 

Let $V$ a solid torus in $S^3$. A \emph{cable knot} $K$ is an embedded circle in $\partial V$ with slope $p/q$ with respect to the Seifert framing for $V$, such that $gcd(p,q)=1$ and $q>1$. The number $q$ is called the \emph{index} of $K$ and $V$ is called the \emph{companion} of $K$. 

In this paper we study the behavior of both invariants, representativity
and waist, under the cabling operation. Let $K$ be a $(p,q)$-cable with companion solid torus $V$. The boundary torus of $V$ allows us to obtain the bounds $r(K)\geq p$ and $waist(K)\geq p\cdot waist(J)$, where $J$ is a core of $V$. 
We will show that, most of the time (see below), these estimates are exact. 

A cable knot is called \emph{inconsistent} if the knot on the companion torus is not a boundary slope of the companion knot; i.e., a cable knot is inconsistent if there is no essential surface in the complement of the torus whose boundary is the knot.

\begin{theorem} \label{thm representativity}
Let $K$ be an inconsistent cable knot with index $p$. Then $r(K) = p$ and the companion torus is the unique surface that realizes the representativity. \end{theorem}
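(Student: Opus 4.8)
The plan is to show that the companion torus $T=\partial V$ is, up to isotopy, the \emph{only} closed orientable surface $F\supset K$ with $r(F,K)\ge 2$; since the index satisfies $p\ge 2$ (as $q>1$), this yields both $r(K)=p$ and the uniqueness statement at once. For the lower bound: because the companion knot $J$ is nontrivial, $E(J)=S^{3}\setminus\mathring V$ has incompressible boundary, so every compressing disk for $T$ in $S^{3}$ is isotopic to a meridian disk of $V$, and such a disk meets $K$ in exactly $p$ points (the winding number of $K$ in $V$); hence $r(T,K)=p$ and $r(K)\ge p$.

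Now let $F$ be a closed orientable surface containing $K$ with $r(F,K)\ge 2$, and set $\widehat F=F\setminus\mathring N(K)\subset E(K)$, a properly embedded surface whose boundary is two parallel copies of the $F$-framing curve $\gamma$ of $K$ on $\partial N(K)$ (two components, since $F$ is orientable so a neighborhood of $K$ in $F$ is an annulus). The standard dictionary between compressing disks for $F$ in $S^{3}$ and (boundary-)compressions of $\widehat F$ in $E(K)$ shows that $r(F,K)\ge 1$ forces $\widehat F$ incompressible and $r(F,K)\ge 2$ forces it $\partial$-incompressible; moreover $\widehat F$ is not boundary-parallel since $K\subset F$. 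So $\widehat F$ is essential in $E(K)$. Making the $\partial$-incompressibility step precise — verifying that a $\partial$-compression of $\widehat F$ really produces a compressing disk for $F$ meeting $K$ exactly once — is one technical point to handle carefully.

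Next I would use the decomposition $E(K)=E(J)\cup_{T}C$, where $C=V\setminus\mathring N(K)$ is the cable space, a Seifert fibered space over an annulus with one exceptional fiber, and $T$ is incompressible in both pieces. Isotope $\widehat F$ to make $|\widehat F\cap T|$ minimal; a routine innermost-disk/outermost-arc argument then shows that every circle of $\widehat F\cap T$ is essential in $\widehat F$ and in $T$, so these circles are mutually parallel on $T$ of one slope $s$, and $\widehat F\cap E(J)$, $\widehat F\cap C$ are essential in $E(J)$, $C$ respectively. Suppose $\widehat F\cap T\neq\emptyset$. Invoking the classification of essential surfaces in cable spaces (each is vertical or horizontal), $s$ is either the cabling slope (the slope of $K$ on $\partial V$) or the meridian $\mu_{V}$. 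The first is impossible by \emph{inconsistency}: the nonempty essential surface $\widehat F\cap E(J)$ would be an essential surface in $E(J)$ with boundary of the cabling slope, contradicting that this slope is not a boundary slope of $J$. In the second case $\widehat F\cap C$ is horizontal, hence its boundary on $\partial N(K)$ has meridional slope $\mu_{K}$, whereas $\partial\widehat F$ has the $F$-framing slope $\gamma\neq\mu_{K}$ — a contradiction. Hence $\widehat F\cap T=\emptyset$ and $\widehat F\subset C$.

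Finally, an essential surface in $C$ with boundary entirely on $\partial N(K)$ cannot be horizontal (a horizontal surface meets every boundary torus, including $T$), so it is vertical, i.e. a union of vertical annuli; since $\widehat F$ is connected with exactly two boundary circles — connectedness holds because a boundary curve of slope $\gamma\neq\mu_{K}$ on $\partial N(K)$ is nonzero in $H_{1}(V)$, so $K$ cannot separate $F$ — the surface $\widehat F$ is the unique essential annulus of $C$ swallowing the exceptional fiber. The same annulus is $\widehat T=T\setminus\mathring N(K)$, so $\widehat F$ is isotopic to $\widehat T$ in $C$; in particular their boundary slopes on $\partial N(K)$ agree, and the isotopy extends across $N(K)$ to an ambient isotopy of $S^{3}$ carrying $F$ onto $T$. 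Thus $r(F,K)=r(T,K)=p$ and $F$ is isotopic to $T$, proving the theorem. The main obstacle I expect is the cable-space input used twice above — that every essential surface in $C$ is vertical or horizontal, with the attendant restrictions on its boundary slopes on the two boundary tori; the lower bound, the reduction to essential surfaces, and the final identification of the swallowing annulus are comparatively routine.
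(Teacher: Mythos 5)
There is a genuine gap, and it sits exactly where you flagged the ``cable-space input'': the claim that a horizontal piece of $\widehat F\cap C$ must have meridional boundary slope $\mu_K$ on $\partial N(K)$ (equivalently, slope $\mu_V$ on $T$) is false. Horizontal surfaces in the cable space are not unique up to boundary slope: twisting the $p$-punctured meridian disc along the vertical spanning annulus of $C$ (a self-homeomorphism preserving the Seifert fibration, acting on $\partial C$ by Dehn twists along the fibre curves) yields essential \emph{horizontal} surfaces whose boundary slopes on $\partial N(K)$ are $\mu_K$ plus multiples of the fibre (cabling) class --- for instance the integer slopes $pq\pm1$ --- and whose slopes on $T$ are likewise $\mu_V$ plus multiples of the fibre slope. (Hatcher's finiteness of boundary slopes does not forbid this, since $C$ has two boundary tori.) Consequently, in your horizontal subcase there is no contradiction: inconsistency only excludes the \emph{fibre} slope on $T$, and a horizontal piece never has the fibre slope on $T$ anyway; nor is its slope on $\partial N(K)$ forced to be $\mu_K$, so it can perfectly well equal the $F$-framing $\gamma$. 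Hence ``$\widehat F\cap T=\emptyset$'' does not follow, and the whole uniqueness chain collapses at that point. Worse, the intermediate statement you are driving at --- that $T$ is the only closed surface containing $K$ whose restriction $\widehat F$ is essential in $E(K)$, i.e.\ the only $F$ with $r(F,K)\ge 2$ --- is stronger than anything the paper proves and is very likely false: for example $F=\partial N(S)$ for a minimal-genus Seifert surface $S$ of $K$ contains $K$, must meet $T$ (no Seifert surface of $K$ lies in $V$ since $[K]=p\neq0$ in $H_1(V)$), and meets the cable space in horizontal pieces.

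The paper's proof of Theorem~\ref{thm representativity} avoids this trap: when $F$ meets $\tilde T$, it does not try to show such $F$ cannot exist. Inconsistency is used only once, in the Claim, to guarantee that $F\cap A_K$ contains spanning arcs (equivalently, that the curves $F\cap\tilde T$ are not forced to be cabling-slope curves bounding an essential surface outside $\tilde V$); after that, explicit compressing discs built from $F^-$ (peripheral annuli or meridian discs) and the annulus $F\cap\eta(K)$ show $r(F,K)\le 2$, while the Morse sweep-out gives $r(F,K)\le p$ when $F\subset\tilde V$. So your Case ``$\widehat F\cap T\neq\emptyset$'' must be replaced by a direct construction of a low-intersection compressing disc, not a nonexistence argument. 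Two smaller remarks: your lower bound tacitly assumes the companion $J$ is nontrivial (otherwise $T$ compresses on both sides and $r(T,K)=\min(p,q)$), which should be stated; and your treatment of the case $\widehat F\subset C$ (essential, hence vertical, hence the exceptional-fibre-swallowing annulus, hence $F\cong T$) is essentially correct and in fact sharper than the paper's sweep-out in that case --- but on its own it does not rescue the theorem, because of the failure above.
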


In \cite{Pardon}, Pardon showed that the distortion of a knot, $\delta(K)$, is at least $\frac{1}{160}r(K)$. Thus, the following corollary follows immediately:

\begin{corollary}
Let $K$ be an inconsistent cable knot with index $p$. Then $\delta(K) \geq \frac{p}{160}$. 
\end{corollary}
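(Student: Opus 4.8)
The plan is simply to chain together the two facts that are already in hand: Theorem \ref{thm representativity}, which pins down the representativity of an inconsistent cable knot, and Pardon's lower bound on distortion in terms of representativity recalled just above. First I would invoke Theorem \ref{thm representativity} to record that, for an inconsistent cable knot $K$ with index $p$, one has $r(K) = p$. Next I would apply Pardon's inequality $\delta(K) \geq \frac{1}{160}\, r(K)$ from \cite{Pardon}. Substituting $r(K) = p$ into this inequality gives
\[
\delta(K) \;\geq\; \frac{1}{160}\, r(K) \;=\; \frac{p}{160},
\]
which is exactly the asserted bound.

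Since both ingredients are already established, there is no substantive obstacle to overcome: the entire content of the corollary is the substitution $r(K)=p$ into Pardon's estimate. The only point deserving a moment's attention is verifying that the hypotheses of Pardon's theorem are met, namely that $K$ is a knot in $S^{3}$ and that $\delta$ denotes its distortion; both hold by the setup, since a cable knot is by definition a knot in $S^{3}$. Thus the corollary follows immediately, and no further argument is required.
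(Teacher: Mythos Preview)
Your proposal is correct and matches the paper's approach exactly: the paper states that the corollary ``follows immediately'' from Pardon's inequality $\delta(K)\geq \tfrac{1}{160}r(K)$ together with Theorem~\ref{thm representativity}, and your argument is precisely this substitution.
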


\begin{theorem} \label{thm waist}
Let $K$ be an inconsistent cable knot with index $p$. Then $waist(K)=p\cdot waist(J)$ where $J$ is a companion knot for $K$. 
\end{theorem}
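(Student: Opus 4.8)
The inequality $waist(K)\ge p\cdot waist(J)$ is the lower bound recorded above, so the real content is the reverse inequality: every closed incompressible surface $F$ in the exterior $E(K)$ satisfies $waist(F,K)\le p\cdot waist(J)$. Let $V^+$ be the companion solid torus, enlarged slightly so that $K\subset V^+$ sits as a $(p,q)$-curve on a concentric torus; write $T^+=\partial V^+$, let $J$ be the core of $V^+$, and note $E(J)\subset E(K)$ and $E(K)=X\cup_{T^+}E(J)$ with $X=V^+\setminus \mathrm{int}\,N(K)$ the cable space. We may assume $J$ is nontrivial, the case of the unknot (torus knots) being classical. The plan is to feed $F$ into the classification of closed incompressible surfaces in cable-knot exteriors established earlier in the paper: because $K$ is \emph{inconsistent}, no such surface can meet $T^+$ in essential curves, so $F$ is isotopic either to $T^+$ or into $E(J)$ (a closed incompressible surface lying inside $V^+$ is a boundary-parallel torus, hence again isotopic to $T^+$). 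If $F\simeq T^+$, then a compressing disc for $T^+$ must be a meridian disc of $V^+$ — it cannot lie in $E(J)$, since $J$ is knotted — and a meridian disc meets the $(p,q)$-curve $K$ in exactly $p$ points, so $waist(T^+,K)=p\le p\cdot waist(J)$ as $waist(J)\ge 1$.

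The remaining case, $F\subset E(J)$, is the heart of the matter. Since $E(J)\subset E(K)$, the surface $F$ is incompressible in $E(J)$ as well, so $waist(F,J)\le waist(J)$. The key tool is a standard-position lemma: every compressing disc $D$ for $F$ in $S^3$ can be isotoped, without increasing $|D\cap K|$ or $|D\cap J|$, so that $D\cap V^+$ is a union of meridian discs of $V^+$. Granting this, a disc $D$ in standard position satisfies $|D\cap K|=p\cdot m$ and $|D\cap J|=m$, where $m\ge 1$ is the number of meridian discs (a meridian disc meets $K$ in $p$ points and the core once, and $m=0$ would exhibit $D$ as a compression of $F$ inside $E(J)$). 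Taking $D$ to realize $waist(F,J)=|D\cap J|$ and then putting it in standard position gives $m=waist(F,J)$, hence
\[
waist(F,K)\;\le\;|D\cap K|\;=\;p\cdot waist(F,J)\;\le\;p\cdot waist(J).
\]
Together with the companion-torus case and the lower bound this yields $waist(K)=p\cdot waist(J)$; the same computation applied to a surface $F$ realizing $waist(J)$ both recovers the lower bound and shows that such an $F$ stays incompressible in the larger manifold $E(K)$.

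The main obstacle is proving the standard-position lemma, and this is where one must work carefully. After isotoping $D$ to minimize $|D\cap T^+|$ (having first minimized $|D\cap J|$), one argues in three steps. First, $D\cap T^+$ contains no curve inessential on $T^+$: an innermost such curve cuts off a disc in $D$ lying in $V^+$ or in $E(J)$ and is removed by an isotopy across the ball it cobounds with a disc on $T^+$, using irreducibility of $S^3$ and the minimality of $|D\cap J|$ to see that ball avoids $J$ (and may be taken to avoid $K$). Second, $D\cap T^+$ contains no curve that is essential on $T^+$ but non-meridional: an innermost one bounds a disc on the $V^+$ side — forcing the curve to be a meridian, a contradiction — or on the $E(J)$ side, compressing $T^+$ outward and contradicting that $J$ is knotted. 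Third, each component of $D\cap V^+$ is then a planar surface in $V^+$ with meridional boundary; boundary-compressing along sub-discs of meridian discs of $V^+$ and using that the only properly embedded incompressible surface in a solid torus with meridional boundary is a meridian disc, one reduces each component to a meridian disc, keeping the intersection numbers with $K$ and with $J$ under control throughout. This analysis closely parallels the treatment of compressing discs in the proof of Theorem~\ref{thm representativity}.
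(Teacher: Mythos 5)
Your overall architecture — either $F$ is (isotopic to) the companion torus, with $waist(F,K)=p$, or $F$ lies in $E(J)$ and one rearranges a compressing disc realizing $waist(F,J)$ so that it meets the companion solid torus in meridian discs, giving $waist(F,K)\le p\cdot waist(F,J)\le p\cdot waist(J)$ — is sound, and the final count is correct; in fact it fleshes out a step the paper itself states in one line. The genuine gap is the classification step. You dismiss essential intersections of $F$ with your torus $T^+=\partial V^+$ ``because $K$ is inconsistent,'' but inconsistency only forbids essential surfaces in $E(J)$ whose boundary slope is the \emph{cable} slope; a priori $F$ could meet $T^+$ in essential curves of some other slope (another boundary slope of $J$, or meridians of $V^+$), and nothing in your text rules this out. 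The paper gets the slope for free by cutting along the torus $T$ that actually \emph{contains} $K$: since $F$ misses $\eta(K)$, $F\cap T$ lies in the annulus $A_K$, so every essential curve of intersection is isotopic to $K$, and only then does inconsistency bite; the pieces are first made incompressible and $\partial$-incompressible via Lemma~\ref{lem:essential_piece} and then classified by Lemmas~\ref{solidtorus} and~\ref{thickenedtorus}. In your pushed-off setup you would have to supply the analogous analysis of $F\cap X$ in the cable space (e.g.\ show an essential piece with boundary only on $T^+$ is a vertical annulus of the cable slope), which is missing. Relatedly, the parenthetical claim that a closed surface incompressible in $E(K)$ and contained in $V^+$ must be isotopic to $T^+$ is not right as stated: it could be peripheral to $\partial\eta(K)$ (harmless, since then $waist(F,K)=1$, but it shows the classification is being quoted rather than proved).

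A secondary, fixable, weak point is Step 3 of your standard-position lemma: the compressions and boundary compressions you perform on $D\cap V^+$ use discs in $V^+$ that may themselves meet $K$ and $J$, so ``keeping the intersection numbers under control'' is exactly what needs proof, and an isotopy of $D$ is not the right operation anyway. A cleaner finish: after your Steps 1--2 all curves of $D\cap T^+$ are meridians of $V^+$; let $c_1,\dots,c_r$ be those bounding the outermost (maximal) subdiscs of $D$, discard those subdiscs, and cap the $c_i$ with disjoint flat meridian discs of $V^+$. The result is an embedded disc $D^*$ with $\partial D^*=\partial D$, with interior disjoint from $F\subset E(J)$, hence a compressing disc, and $|D^*\cap K|=pr$; since each discarded subdisc is bounded by a meridian, it meets $J$ at least once, so $r\le |D\cap J|=waist(F,J)$, which is the inequality you want. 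With that disc-swap in place of your Step 3, and with the slope/classification gap above filled (most directly by simply following the paper's decomposition along $T$, $T^{\pm}$, $\tilde T$), your argument goes through.
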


Recall that by \cite{S(K) is finite}, the set of boundary slopes for any knot is finite therefore the above theorem applies to almost every cable.

In particular for $(p,q)$-torus knots the set of all boundary slopes is $\{0,pq\}$ and for 2-bridge knots the set of boundary slopes is a subset of the even integers, \cite{inc surfs 2bridge knots}.  

\begin{corollary}
Let $J$ be either a 2-bridge or torus knot. Then for every cable $K$ of index $p$
along $J$, $r(K)=p$ and $waist(K)=p\cdot waist(K)$. 
\end{corollary}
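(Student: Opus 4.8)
The plan is to obtain the corollary directly from Theorems~\ref{thm representativity} and~\ref{thm waist}: these already say that an \emph{inconsistent} cable knot of index $p$ has representativity $p$ and waist $p\cdot waist(J)$, so it suffices to check that every cable $K$ of index $p$ whose companion $V$ is a tubular neighborhood of a $2$-bridge or torus knot $J$ is inconsistent.

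To do this I would first identify the slope carried by the curve $K$ on the companion torus $\partial V=\partial N(J)$. Fixing the standard meridian-longitude basis $(\mu,\lambda)$ of $H_1(\partial N(J))$, with $\lambda$ the Seifert longitude of $J$ --- which is also the Seifert framing of the solid torus $V$, so that ``slope on $\partial V$'' and ``slope on $\partial N(J)$'' agree --- a cable of index $p$ is an essential simple closed curve representing a primitive class $a\mu+p\lambda$ with $\gcd(a,p)=1$; that is, a curve of slope $a/p$. Since $K$ is a genuine cable, $p\ge 2$, so $a/p$ is not an integer.

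Finally I would invoke the two facts recalled just before the corollary: every boundary slope of a torus knot $T(r,s)$ lies in $\{0,rs\}$, and every boundary slope of a $2$-bridge knot lies in $2\mathbb{Z}$; in particular every boundary slope of $J$ is an integer. Hence $a/p$ is not a boundary slope of $J$, so there is no essential surface in the exterior of $J$ (the complement of the torus $\partial V$) bounded by copies of $K$; by definition $K$ is inconsistent, and Theorems~\ref{thm representativity} and~\ref{thm waist} then yield $r(K)=p$ and $waist(K)=p\cdot waist(J)$. I do not expect any genuine obstacle in this argument: the entire content is the elementary observation that a nontrivial cabling curve has longitudinal winding at least $2$ and hence non-integral slope on the companion torus, while $2$-bridge and torus knots have only integral boundary slopes. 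The one point deserving a careful sentence is the identification of the Seifert framing of the abstract solid torus $V$ with the Seifert framing of $J$, which is exactly what permits comparing the cabling slope with the list of boundary slopes of $J$.
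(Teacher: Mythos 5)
Your proposal is correct and follows essentially the same route as the paper: the paper's (one-sentence) proof likewise observes that the cabling curve has non-integral slope on the companion torus, while all boundary slopes of torus knots ($\{0,rs\}$) and $2$-bridge knots (even integers) are integers, so the cable is inconsistent and Theorems~\ref{thm representativity} and~\ref{thm waist} apply. Your write-up simply makes explicit the slope computation and the identification of the Seifert framing of $V$ with that of $J$, which the paper leaves implicit.
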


\begin{proof}
Every cable knot $K$ with a pattern that is a 2-bridge knot or a torus knot has a finite, non-integer slope therefore $K$ is inconsistent.
\end{proof}

\section{Main results}

A properly embedded surface $F$ in a 3-manifold $M$ is \emph{incompressible} if it does not have any compressing discs. The surface is \emph{peripheral} if it is boundary parallel. A sphere in a 3-manifold is \emph{essential} if it does not bound a ball. The following two lemmas are well known:

\begin{lemma}\label{solidtorus}\cite{W1}
Every connected orientable incompressible surface in a solid torus is either a peripheral disc, a peripheral annulus, or a meridian disc.
\end{lemma}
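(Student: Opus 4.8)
The plan is to put $F$ in minimal position with respect to a meridian disc of $V$ and then induct on the number of intersections, reducing the base case to surfaces in a $3$-ball. Fix a meridian disc $D\subseteq V$ and isotope $F$ so that it meets $D$ transversely with $|F\cap D|$ minimal; then $F\cap D$ is a disjoint union of arcs and simple closed curves in $D$. If some such circle is innermost in $D$ it bounds a subdisc $D_{0}\subseteq D$ with interior disjoint from $F$; incompressibility of $F$ prevents this circle from being essential in $F$ (else $D_{0}$ would be a compressing disc), so it also bounds a subdisc $D_{1}\subseteq F$, and since a solid torus is irreducible the sphere $D_{0}\cup D_{1}$ bounds a ball, across which an isotopy of $F$ lowers $|F\cap D|$ — a contradiction. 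Hence $F\cap D$ consists of arcs only.

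If $F\cap D=\emptyset$, then $F$ lies in the $3$-ball obtained by cutting $V$ along $D$; a connected properly embedded incompressible surface in a ball with nonempty boundary is a boundary-parallel disc (take a boundary curve innermost on the boundary sphere, cap it off inside the surface using incompressibility, and appeal to irreducibility of the ball), while one with empty boundary would be an essential sphere, impossible in $V$. This gives the peripheral-disc conclusion and, applied to a closed $F$, shows no closed incompressible surface occurs. If instead $F\cap D$ is a nonempty family of arcs, let $a$ be an outermost one, cutting a subdisc $\Delta\subseteq D$ off $D$ with $\partial\Delta=a\cup b$, $b\subseteq\partial V$, and interior of $\Delta$ disjoint from $F$; minimality forces $a$ to be essential in $F$, so $\Delta$ is a $\partial$-compressing disc. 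The surface $F'$ obtained by $\partial$-compressing along $\Delta$ is incompressible, satisfies $|F'\cap D|<|F\cap D|$ and $\chi(F')=\chi(F)+1$, so by induction every component of $F'$ is a peripheral disc, a peripheral annulus, or a meridian disc. Recovering $F$ by reattaching the band dual to $\Delta$, and using that $\partial F$, once any inessential components are dealt with, is a system of mutually parallel essential curves on $\partial V$ whose total class vanishes in $H_{1}(V)\cong\mathbb{Z}$, I would check that the connected surface $F$ must itself be one of the three listed types — in particular that $\chi(F)\geq 0$, which rules out the higher-genus and many-boundary-component reassemblies.

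The step I expect to be the main obstacle is this last reassembly: verifying that $\partial$-compression of an incompressible surface inside $V$ stays incompressible (even though $\partial V$ is compressible in $V$), handling the possible disconnection of $F'$, and — most of all — showing that attaching a band to a disjoint union of peripheral discs, peripheral annuli, and meridian discs cannot produce an incompressible surface of negative Euler characteristic. This is where one genuinely has to use that $F$ is incompressible and not merely assembled from discs, via the fact that parallel essential boundary curves on $\partial V$ together with incompressibility of $F$ force $F$ to be an annulus, and that incompressible annuli in a solid torus are peripheral. A minor point handled along the way is the convention for $2$-spheres, which have no compressing disc vacuously; irreducibility of $V$ rules out essential spheres and hence every closed incompressible surface.
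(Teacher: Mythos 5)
The paper does not prove this lemma at all --- it is quoted as a known result with a citation to Waldhausen --- so your proposal can only be measured against the standard argument, and as written it has a genuine gap exactly where you flag one. The induction skeleton (remove circles of $F\cap D$ by incompressibility and irreducibility, $\partial$-compress along an outermost arc, recurse) is fine, but the two facts you defer are the entire content of the lemma. First, incompressibility of the $\partial$-compressed surface $F'$ is not justified; it is true here, but the clean proof goes through the loop theorem ($F$ two-sided incompressible $\Rightarrow$ $\pi_1$-injective, so an embedded null-homotopic curve of $F'$, pushed off the copies of the compressing disc, bounds a disc in $F$ avoiding the arc $\alpha$), and you never supply such an argument. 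Second, the reassembly step is both incomplete and partly circular: your claim that $\chi(F)\ge 0$ follows from the boundary curves being parallel essential curves with vanishing total class in $H_1(V)$ fails when the boundary slope is the meridian (a once-punctured torus with a single meridian boundary component has trivial boundary class and is only excluded by incompressibility, not by homology), and your closing appeal to ``incompressible annuli in a solid torus are peripheral'' assumes a case of the very lemma being proved unless you prove it separately (e.g.\ by $\partial$-compressing the annulus to a disc and analyzing the band, which you do not do). The base case also has a small slip: a disc disjoint from $D$ whose boundary is essential in the annulus $\partial V$ cut along $\partial D$ is a meridian disc, not a peripheral one, so ``boundary-parallel in the cut-open ball'' does not translate directly into ``peripheral in $V$.''

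Note also that once incompressibility is converted into $\pi_1$-injectivity, the induction becomes unnecessary: $\pi_1(F)$ injects into $\pi_1(V)\cong\mathbb{Z}$, so $F$ is a sphere, disc, annulus, or torus; irreducibility of $V$ disposes of spheres, $\mathbb{Z}^2\not\hookrightarrow\mathbb{Z}$ disposes of tori, a properly embedded disc is peripheral or a meridian disc according to whether its boundary is inessential or essential in $\partial V$, and the annulus case is handled by a single $\partial$-compression (your meridian-disc machinery is genuinely useful only there, to show the annulus is $\partial$-compressible and that the resulting disc must be peripheral for homological reasons, whence the annulus is peripheral). This is essentially the argument behind the Waldhausen reference, and filling your sketch along these lines would make it correct; as it stands, the decisive steps are asserted rather than proved.
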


\begin{lemma}\label{thickenedtorus}\cite{W2}
Suppose $F$ is a connected, orientable, incompressible surface properly embedded in a thickened torus $T \times I$. Then $F$ is one of the following:

\begin{enumerate}
\item A peripheral disc,
\item A peripheral annulus,
\item $\gamma \times I$ where $\gamma$ is an essential simple loop of $T$,
\item $T \times \{i\}$, where $i \in I$.

\end{enumerate} 

\end{lemma}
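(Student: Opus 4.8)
The plan is to cut the thickened torus $T\times I$ along a vertical annulus, reducing to a solid torus where Lemma~\ref{solidtorus} applies, and then to reassemble. We repeatedly use that $T\times I$ is irreducible and that the incompressibility of surfaces lets one remove trivial intersection circles by innermost-disc arguments.

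First, if some component of $\partial F$ bounds a disc in $\partial(T\times I)$, choose an innermost such component $c$, bounding a disc $E$ with interior disjoint from $\partial F$; pushing $E$ into the interior gives a disc meeting $F$ only along $\partial E=c$, so incompressibility forces $c$ to bound a disc in $F$, and hence $F$ is that disc (in a connected surface a boundary circle bounds a disc only if the surface is a disc); then irreducibility makes $F$ a peripheral disc. So assume no component of $\partial F$ is trivial. If $\partial F\neq\emptyset$, its components are essential, on each boundary torus they are parallel, and a homology computation in $H_1(T\times I)$ shows they all have a single common slope $\alpha$; if $\partial F=\emptyset$, let $\alpha$ be an arbitrary slope. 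Let $A=\gamma\times I$ be the vertical annulus with $\gamma$ a simple closed curve of slope $\alpha$, isotoped to be disjoint from $\partial F$. Then $A\cap F$ consists of circles, and after minimizing $|A\cap F|$, the incompressibility of $A$ and of $F$ (together with irreducibility) leaves only circles isotopic to the core of $A$.

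Cutting $T\times I$ along $A$ gives a solid torus $W$ whose boundary is the union of two copies $A',A''$ of $A$ with the two annuli $A_0,A_1$ coming from $T\times\{0,1\}$; the core of $W$ has slope $\alpha$, and all the curves of $A\cap F$ and of $\partial F$ become curves in $\partial W$ that are essential and isotopic to this core. Cutting $F$ along $A\cap F$ yields an incompressible surface $\widehat{F}$ properly embedded in $W$, so by Lemma~\ref{solidtorus} each of its components is a peripheral disc, a peripheral annulus, or a meridian disc. Since every boundary curve of $\widehat{F}$ is essential in $\partial W$ and is not a meridian of $W$, the first and third possibilities are excluded; hence every component of $\widehat{F}$ is a peripheral annulus, parallel to a longitudinal sub-annulus of $\partial W$.

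To finish, reassemble $F$ by regluing $A'$ to $A''$. Identifying $W\cong S^1\times D^2$ with $S^1$ the longitudinal direction, each component of $\widehat{F}$ is isotopic to $S^1\times\beta$ for a properly embedded arc $\beta\subset D^2$, and the regluing acts as the identity on the $S^1$ factor; so $F$ is isotopic to $S^1\times G$ where $G$ is a properly embedded $1$-manifold in the annulus obtained from $D^2$ by the regluing, and $G$ is connected since $F$ is. If $G$ is an inessential arc, $F$ is a peripheral annulus; if $G$ is an essential arc, $F$ is isotopic to $\gamma\times I$; if $G$ is an inessential circle then $F$ is a torus bounding a solid torus, hence compressible, which cannot happen; and if $G$ is an essential circle then $F$ is a torus separating $T\times\{0\}$ from $T\times\{1\}$, hence isotopic to some $T\times\{i\}$. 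This yields the four conclusions. I expect the delicate points to be the identification of the boundary slope of $F$ (the homology step) and, above all, the final reassembly, which requires keeping careful track of how the various curves sit in $\partial W$; the remainder is the routine innermost-disc technique together with Lemma~\ref{solidtorus}.
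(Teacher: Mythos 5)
The paper does not prove this lemma at all (it is quoted from Waldhausen \cite{W2}), so your cut-along-a-vertical-annulus reduction to Lemma~\ref{solidtorus} is a reasonable self-contained route, and most of its steps are indeed the routine innermost-disc arguments you describe. But there is one genuine gap, and it occurs exactly where you lean on homology: the claim that ``a homology computation in $H_1(T\times I)$ shows they all have a single common slope $\alpha$.'' Orienting $F$ and summing the classes of its boundary circles only gives $k_0\alpha_0+k_1\alpha_1=0$, where $\alpha_0,\alpha_1$ are the slopes on $T\times\{0\}$, $T\times\{1\}$ and $k_0,k_1$ are the \emph{algebraic} counts of boundary curves. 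This forces $\alpha_0=\alpha_1$ only when $k_0,k_1\neq 0$; if the boundary curves occur in oppositely oriented pairs on each torus (so $k_0=k_1=0$), homology says nothing. Homology genuinely cannot close this case: one can build (compressible) connected surfaces with balanced boundaries of two different slopes on the two tori, so any argument that the slopes agree must use incompressibility. Yet your construction needs agreement of slopes, because otherwise no vertical annulus $A=\gamma\times I$ can be made disjoint from $\partial F$, and $A\cap F$ acquires arcs, after which the key assertion that every boundary curve of $\widehat F$ is a longitude of $W$ fails and Lemma~\ref{solidtorus} no longer finishes the classification. So you must either rule out the mixed-slope balanced case by a separate topological argument (e.g.\ take $\gamma$ of slope $\alpha_0$, minimize intersections, and analyze outermost arcs of $A\cap F$, which produce boundary-compressions that have to be dealt with), or restructure the proof to allow arcs of intersection. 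As written, that step would fail.

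Two smaller points, both fixable and of the kind you already flag: (i) the incompressibility of $\widehat F$ in $W$ after cutting is asserted but needs the short argument that a compressing disc for $\widehat F$ has boundary bounding a disc $D'\subset F$ (by incompressibility of $F$), and $D'$ cannot meet $A$ because every circle of $A\cap F$ is essential in $A$, hence essential in $T\times I$, while circles of $D'\cap A$ would bound subdiscs of $D'$; (ii) the final reassembly requires isotoping \emph{all} components of $\widehat F$ simultaneously into vertical position $S^1\times\beta$ compatibly with the identification $A'\to A''$ (first make $A\cap F$ and $\partial F$ unions of $S^1$-fibers, then straighten the boundary-parallel annuli inductively, outermost first); this is standard but is the bookkeeping you correctly identify as delicate, and it deserves to be written out rather than asserted.
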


We will often use the following set up. Suppose $K$ is a cable knot with companion torus $V$ and let $T=\partial V$. Let $\eta(K)$ be an open regular neighborhood of $K$ in $S^3$ and $E(K) = S^3 - \eta(K)$. Consider a regular neighborhood of $T$ in $S^3$. The boundary of this neighborhood consists of two tori, $\tilde T$ will be the component contained outside of $V$. Let $\tilde V$ be the 3-manifold bounded by $\tilde T$ that contains $V$ with $\eta (K)$ removed. Observe that $\tilde V$ is a solid torus intersected with the exterior of $K$. Let $\tilde F=F-\tilde V$. Let $A_K = T - \eta(K)$.
The boundary of $A_K$ partitions $\partial E(K)$ into two annuli $\partial^+ E(K)$ and $\partial^- E(K)$. Let $T^{\pm} = A_K \cup \partial^{\pm} E(K)$. Without loss of generality, we assume that $T^-$ is the torus which bounds a solid torus $V^- \subset V$ which does not contains $K$. Also let $W$ be a thickened torus which is bounded by $T^+$ and $\tilde T$. Let $F^W = F \cap W$ and $F^- = F \cap V^-$.

\begin{lemma}\label{lem:essential_piece}

Let $K$ be a cable knot with companion torus $V$ and let $F$ be an incompressible and boundary incompressible surface in $S^3-\eta(K)$ possibly with boundary. There exists an isotopy of $F$ which minimizes $(|F\cap \tilde T| ,|F \cap A_K|)$ so that $F$ satisfies the following:

\begin{enumerate}

\item {Every component of $\tilde F$ is incompressible and boundary incompressible in $S^3- \tilde V$.}

\item {Every component of $F^W$ is incompressible in $W$.}

\item{Every component of $F^-$ is incompressible in $V^-$.}

\item{Every component of $F \cap \tilde V$ is incompressible and boundary incompressible in $\tilde V$.}

\end{enumerate}
\end{lemma}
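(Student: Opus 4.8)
The plan is to obtain the four conclusions simultaneously by a standard innermost/outermost surgery argument applied to an isotopy of $F$ chosen to minimize the pair $(|F\cap \tilde T|, |F\cap A_K|)$ lexicographically. First I would fix such a minimizing isotopy representative of $F$ and set up the decomposition $S^3 - \eta(K) = (S^3 - \tilde V) \cup W \cup V^- $, glued along $\tilde T$ and $T^+$, with $A_K$ sitting inside $T^+$. The intersections of $F$ with $\tilde T$ and with $A_K$ are collections of simple closed curves and arcs; minimality of the pair says that no such curve or arc can be removed by an isotopy. The key point, as usual, is that a hypothetical compressing or boundary-compressing disc $D$ for one of the pieces $F^W$, $F^-$, $\tilde F$, or $F\cap\tilde V$ can be used either to compress $F$ itself (contradicting incompressibility of $F$ in $S^3-\eta(K)$) or to produce an isotopy of $F$ strictly reducing $(|F\cap\tilde T|,|F\cap A_K|)$ (contradicting minimality).

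Concretely, I would argue each conclusion in turn. For (2), suppose $D$ is a compressing disc for $F^W$ in $W$. Its boundary $\partial D$ lies on $F^W \subset F$ and bounds no disc on $F$ (else $F$ were compressible or we reduce intersections). Since $F$ is incompressible in the ambient manifold, $\partial D$ bounds a disc $D'$ in $S^3-\eta(K)$; then $D \cup D'$ is a sphere, which bounds a ball since $S^3-\eta(K)$ is irreducible, so $D$ is isotopic rel boundary to a disc $D'$ on the other side, and pushing $F$ across $D'$ reduces $|F\cap\tilde T| + |F\cap T^+|$, hence reduces $|F\cap\tilde T|$ or keeps it fixed while reducing $|F\cap A_K|$ — a contradiction. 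Conclusion (3) in $V^-$ is identical. For (1) and (4), the same surgery handles compressing discs, and boundary-compressing discs are handled analogously: a boundary-compression of $\tilde F$ or of $F\cap\tilde V$ along an arc in $\tilde T$ or in $\partial^\pm E(K)$ yields an isotopy of $F$ pushing an arc of $F\cap A_K$ or a curve of $F\cap\tilde T$ off, again contradicting minimality; if the boundary-compressing disc's arc lies on the knot-exterior boundary $\partial E(K)$, it contradicts boundary-incompressibility of $F$ in $S^3-\eta(K)$ directly. I should also note that closed components of $F\cap\tilde T$ or $F^W$ lying entirely in an interior torus can be ruled out or absorbed by the same minimality plus irreducibility, and that trivial arcs of $F\cap A_K$ are removed by minimality of the second coordinate.

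The main obstacle I anticipate is bookkeeping in the boundary-compression cases: one must track where the boundary arc of the compressing disc lies — on $\tilde T$, on $A_K$, or on $\partial^\pm E(K)$ — and confirm in each case that the resulting modification of $F$ genuinely decreases the lexicographic pair rather than merely trading one type of intersection for another of the same weight, and that the modification can be realized by an ambient isotopy of $F$ (not just an abstract surgery) so that the hypothesis ``$F$ is incompressible and boundary-incompressible in $S^3-\eta(K)$'' is preserved throughout. Care is also needed because $A_K$ is an annulus, not a closed surface, so an arc of $F\cap A_K$ has endpoints on $\partial A_K = \partial^+E(K)\cap\partial^-E(K)$, and the outermost-arc surgeries must respect this. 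Once the case analysis is organized by the position of $\partial D$, each individual case is a routine application of irreducibility of $S^3-\eta(K)$ and the minimality of the chosen isotopy.
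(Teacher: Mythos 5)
Your handling of the compressibility statements --- (2), (3), and the compressing-disc half of (1) and (4) --- is essentially the paper's argument: a compressing disc $D$ for a piece of $F$ forces, by incompressibility of $F$, a disc $D'$ bounded by $\partial D$ \emph{in $F$} (not merely in $S^3-\eta(K)$, as you wrote --- that slip should be fixed, since otherwise $D$ itself would already be a compressing disc for $F$), and pushing $D'$ across the ball bounded by $D\cup D'$ strictly decreases the lexicographic pair $(|F\cap \tilde T|, |F\cap A_K|)$. This matches the paper, which runs the same push in two stages (first minimizing $|F\cap \tilde T|$, then $|F\cap A_K|$ keeping $F\cap\tilde T$ fixed).

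The genuine gap is in the boundary-incompressibility claims of (1) and (4). You assert that a boundary compression of $\tilde F$ or of $F\cap\tilde V$ along an arc $\alpha\subset\tilde T$ ``yields an isotopy of $F$ pushing \dots a curve of $F\cap\tilde T$ off, again contradicting minimality.'' That works only when $\alpha$ joins two \emph{distinct} curves of $F\cap\tilde T$ (then the compression merges them and drops the count by one). In the remaining, main case, where both endpoints of $\alpha$ lie on a single curve $\sigma$ of $F\cap\tilde T$, realizing the boundary compression as an isotopy splits $\sigma$ into two curves, so $|F\cap\tilde T|$ does not decrease (it goes up) and $|F\cap A_K|$ is unchanged; minimality gives no contradiction, and this is exactly the case you defer to ``bookkeeping.'' The paper needs genuinely different input here: for (4) it uses minimality only to force $\partial\alpha$ onto one curve $\sigma$, shows $\alpha$ together with a subarc of $\sigma$ bounds a bigon on the torus $\tilde T$ (using orientability of $F$), glues that bigon to the boundary-compressing disc to produce a compressing disc for $F\cap\tilde V$, and then uses the already-established incompressibility of $F\cap\tilde V$ to conclude the arc $\beta$ was boundary-parallel, contradicting its essentiality; for (1) it instead invokes the fact that the only incompressible, boundary-compressible surface properly embedded in the exterior of the solid torus $\tilde V$ is a boundary-parallel annulus, which can then be isotoped across $\tilde T$ to reduce $|F\cap\tilde T|$. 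Neither of these ideas appears in your proposal, so as written the boundary-incompressibility parts of (1) and (4) are not established.
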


\begin{proof}


First, we show that we can isotope $F$ so that $\tilde F$ is incompressible in $S^3 - \tilde V$. 
Isotope $F$ so that $|F \cap \tilde T|$ is minimal. Suppose that $\tilde F$ is compressible in $S^3- \tilde V$ with a compressing disc $D$. Since $F$ is incompressible, $\partial D$ bounds a disc $D'$ in $F$ such that $D' \cap \tilde T \neq \emptyset$. Since $E(K)$ is irreducible, $D \cup D'$ bounds a ball. So we can isotope $D'$ to remove the intersection $D' \cap T$, reducing the intersection $\tilde T \cap F$, which contradicts $|F\cap \tilde T|$ is minimal. Every component of $\tilde F$ is not boundary compressible since the only properly embedded incompressible, boundary compressible surface in solid torus complement is peripheral annulus, which can be isotoped to reduce the number of intersections. 
 
Note that by the same argument, $F \cap \tilde V$ is incompressible in $\tilde V$.

Now, we show that $F \cap \tilde V$ is boundary incompressible. Suppose that $F \cap \tilde V$ is boundary compressible. Since $F$ is already boundary incompressible in $E(K)$, there is a boundary compressing disk $D$ with $\partial D = \alpha \cup \beta$, $\alpha \subset \tilde T$ and $\beta \subset F\cap \tilde V$ an essential arc. Since $|F\cap \tilde T|$ is minimal, $\alpha$ must connect the same component $\sigma$ of $F\cap \tilde T$, and because $\tilde T$ is a torus and $F$ is orientable, $\alpha \cup \sigma$ must bound a bigon on $\tilde T$. By slightly pushing into $\tilde V$ the union of this bigon with $D$, we obtain a disc for $F\cap \tilde V$. But $F \cap \tilde V$ is incompressible so exist a disc in $F\cap \tilde V$ with the same boundary that the latter compression, inducing a parallelism between $\beta$ into the boundary of $F \cap \tilde V$, a contradiction. 

Second, we show that we can isotope $F \cap \tilde V$ so that $F^W$ is incompressible in $W$. In order to do this, isotope $F\cap \tilde V$, fixing $F \cap \tilde T$ so that $|F\cap A_K|$ is minimal. Then, using the same argument of the beginning of the proof, $F^-$ is incompressible in $V^-$, using the fact that $|F \cap T^-|$ is minimal. It follows that $F^W$ is also incompressible in $W$.

\end{proof}

\begin{proof}[\textbf{Proof of Theorem \ref{thm representativity}}]

Suppose $K$ is an inconsistent cable knot with a companion solid torus $V$. We will use the notation we established in the paragraph before Lemma \ref{lem:essential_piece}. Let $F$ be a surface such that $K \subset F$ and assume $F$ has been isotoped to satisfy the conclusions of Lemma \ref{lem:essential_piece}.  As $K\subset F$ and $K \subset \tilde V$, there are two possibilities; either $F$ is contained in $\tilde V$ or it intersects $\tilde T$.

\smallskip

\begin{case}\label{case1}
$F \subset \tilde V$, i.e., $F \cap \tilde T = \emptyset$. In this case we will prove that $r(K, F) \leq p$.
\end{case}

\smallskip

Let $f : \tilde V \rightarrow S^1$ be a Morse function and assume that $K$ and $F$ are in Morse position and $K$ has no critical points with respect to $f$. The preimage of every regular value of $f$ is a meridian disc for $\tilde V$. Let $D_t$ be one such disc and consider its intersection with $F$. As $F \cap \tilde T= \emptyset$ this intersection consists entirely of simple closed curves. As $D_t$ contains exactly $p$ points of $K$ and $K \subset F$, there are exactly $p$ points of $F$ contained in the curves $F \cap D_t$. In particular, an innermost such curve contains at most $p$ points of $K$. Note that such an innermost curve bounds a compressing disc for $F$ and therefore $r(K, F) \leq p$. 
\smallskip

\begin{case}\label{case2}
$F \cap \tilde T \neq \emptyset$. In this case we will prove that $r(K, F) \leq 2$.

\end{case}
\smallskip

Consider $F \cap \tilde T$ which is a collection of simple closed curves. If any of these curves are inessential on $\tilde T$, an innermost such would bound a compressing disc for $F$ disjoint from $K$  and so we can assume that $F \cap \tilde T$ is a set of parallel loops essential in $\tilde T$. By Lemma \ref{lem:essential_piece} it follows $\tilde F$, $F^W$ and $F^-$ are all incompressible and $\tilde F$ is boundary incompressible.

Recall that $(|F\cap \tilde T| , |F \cap A_K|)$ has been minimized. 

\medskip
\textbf{Claim:}  $\partial A_K \cap F \neq \emptyset$. 

\emph{Proof of Claim:} Suppose $\partial A_K \cap F =\emptyset$. Note that $F$ intersects $A_K$ only in loops by the hypothesis of the Claim and these loops are essential by Lemma \ref{lem:essential_piece}.

The surface $F_W$ has boundary on both $\tilde T$ and $T_+$. By the connectivity of $F$ and Lemma \ref{thickenedtorus}, there exists a non-peripheral annulus. Since one of the boundaries of this annulus is isotopic to $K$, every boundary of $F_W$  on $T_+$ is isotopic to $K$. Therefore $F\cap \tilde T$ is isotopic to $K$. We showed that $F \cap \tilde T$ bounds a incompressible surface in $S^3 - \tilde V$, so each component of $F \cap \tilde T$ is a boundary slope of a companion knot. However, this is not possible because $K$ is inconsistent. 

\qed
\medskip

By the Claim, we may assume that $\partial A_K \cap F \neq \emptyset$.  Note that $A_K \cap F$ is a set of simple loops and properly embedded simple arcs on $A_K$. By the minimality assumption in Lemma \ref{lem:essential_piece}, all  loops of intersection must be essential. Suppose that there exists a simple arc in $A_K \cap F$ which bounds a disc in $A_K$. Then an outermost such arc bounds a boundary compressing disc for $F$, which implies that $r(K,F) \leq 1$. Hence, we can assume that every simple arc of intersection is essential, hence a spanning arc in $A_K$. This also implies that there are no essential simple loops of intersection.

 Let $\Gamma^{\pm} =F^{\pm}\cap T^{\pm}$. Then $\Gamma^{\pm}$ are sets of essential simple loops on the tori $T^{\pm}$. By Lemma \ref{lem:essential_piece} $F^-$ is incompressible in $V^-$ and therefore $\Gamma^-$ is a set of essential simple loops in $T^-$ and so $F^-$ is a set of either peripheral annuli, or meridian discs. 

Suppose that $F^-$ is a set of peripheral annuli. Take an outermost such annulus $\mathcal{A}$ so that it cuts $V^-$ into two solid tori, one of them not containing $F^-$. Notice that $\mathcal{B}=F\cap \eta(K)$  is also a peripheral annulus which coincides with $\mathcal{A}$ in pairs of consecutive arcs in $\partial ^- E(K)$. When pushing both $\mathcal{A}$ and $\mathcal{B}$ towards $\partial ^- E(K)$, either their projections on  $\partial ^- E(K)$ coincide or not. If the projections do not coincide, taking a curve like in Figure \ref{boundarycompressing2} we obtain a compression for $F$ intersecting $K$ once. If some of their projections coincide, since $\mathcal{A}$ and $\mathcal{B}$ agree on their boundaries, both parallelisms induce a compressing disc for $F$ which intersects $K$ once. Hence $r(K,F)=1$. 

\begin{figure}[h]
\centering

\includegraphics[scale=0.3]{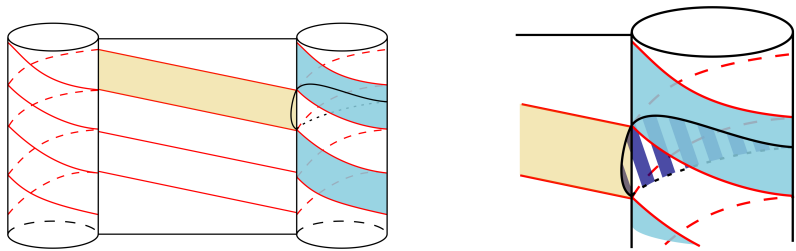}

\caption{The shaded rectangle in $A_K$ represents a piece of $\mathcal{A}$, and the curved rectangle in $\partial E(K)$ represents part of the projection of $\mathcal{B}$ on it. Observe that the black curve is contained in $\mathcal{A}\cup \mathcal{B}\subset F$; going inside $\eta (K)$ and interseting $K$ once. Here, $K$ can be thought of as the core of $\mathcal{B}$. Clearly it is a compression for $F$.}
\label{boundarycompressing2}
\end{figure}

Suppose now that $F^-$ is a set of meridian discs. Consider two adjacent meridian discs $D_1$ and $D_2$. The curves $\partial D_1$ and $\partial D_2$ cobound an annulus $\mathcal{A}\subset T^-$ with interior disjoint from $F$. As above, take the peripheral annulus $\mathcal{B}=F\cap \eta (K)$ and project it towards $\partial E(K)$. Let $C$ be a square component of $\mathcal{A}\cap A_K$; each of the arc components of $\partial C\cap \partial E(K)$ either coincide with the projection of $\mathcal{B}$ or not. If one arc coincides and the other does not, we can find a disc such that its boundary intersects $K$ geometrically and algebraically twice (see Figure \ref{gabaidisc1}). Hence, this disc is a compressing disc of $F$, and $r(K, F) \leq 2$. If none of the arcs coincide, we can find two discs which intersect $K$ geometrically twice but algebraically $0$ times (see figure \ref{gabaidisc2}). However, the boundaries of the two discs intersect once, so both are compressing discs of $F$. Hence, $r(K,F) \leq 2$. Notice that if both arcs of $\partial C\cap \partial E(K)$ coincide, we can replace $D_2$ with the other meridian disc in $F^-$ adjacent to $D_1$ and reach the same conclusion. Therefore $r(F,K)\leq 2$.\\

\begin{figure}[h]
\centering
\includegraphics[scale=0.35]{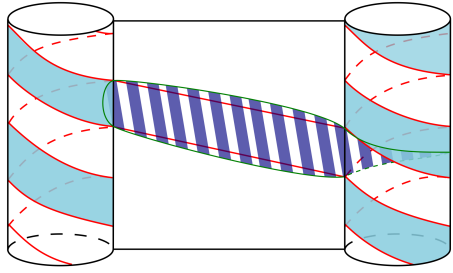}
\caption{The green loop is a boundary of a compressing disc of $F$ which intersects $K$ geometrically and algebraically twice.}
\label{gabaidisc1}
\end{figure}

\begin{figure}[h]
\centering
\includegraphics[scale=0.35]{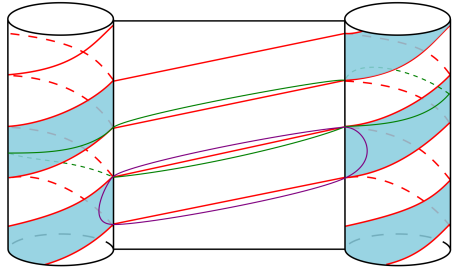}
\caption{The green and purple loops are boundaries of discs in the exterior of $F$ in $S^3$ which intersects $K$ geometrically twice but algebraically zero times. However, they are transversely intersect once, hence both loops are essential on $F$, hence, they are compressing discs of $F$.}
\label{gabaidisc2}
\end{figure}

\end{proof}

\begin{proof}[\textbf{Proof of Theorem \ref{thm waist}}]

Let $F\subset S^3$ be a closed surface disjoint from $K$ such that $F$ is incompressible in $S^3-\eta(K)$. We will continue to use the notation established in the paragraph before Lemma \ref{lem:essential_piece}. If $F\cap \tilde V=\emptyset$ then $F$ is contained in the complement of $\tilde V$. If $D$ is a compressing disc for $F$, it will intersect $\tilde V$ in meridians and so $|D\cap K|=p\cdot |D\cap J|$ where $J$ is the core of $V$. Thus $waist(K,F)=waist(J,F)\cdot p$.

Suppose now that $F\cap \tilde V\neq \emptyset$, and recall that $F^-$, $F^W$ and $\tilde F$ are incompressible in $V^-$, $W$ and $S^3-\tilde V$, respectively (Lemma \ref{lem:essential_piece}). Moreover, since $F$ is disjoint from $K$, $\partial(A_k)\cap F= \emptyset$. By Lemma \ref{thickenedtorus}, $F^W$ is either isotopic to $T\times \{i\}$ (in such case $waist(K,F)=p$), or $F^W$ contains a non-peripheral annulus. Suppose the latter, then the components of $F\cap T$ are isotopic to $K$ and, since $K$  is inconsistent, $\tilde F$ must be the union of boundary parallel annuli in $S^3-V$ which can be pushed inside $\tilde V$. Hence, $F^W \cup \tilde F\simeq F^W$ is the union of annuli parallel to annuli in $T^+$. Finally, Lemma \ref{solidtorus} implies that $F^-$ is union of peripheral annuli and so $F=F^W\cup F^-$ is parallel to $\partial \eta (K)$, and thus $waist(F,K)=1$.


\end{proof}

\end{document}